\newtheorem{thm}{Theorem}[section]
\newtheorem{lem}[thm]{Lemma}
\theoremstyle{definition}
\theoremstyle{remark}
\numberwithin{equation}{section}
\begin{document}

\begin{frontmatter}



\title{Asymptotic profile of solutions to the heat equation on thin plate with boundary heating}

\author[SKKU1]{Eun-ho Lee}
\ead{e.h.lee@skku.edu}

\author[SKKU2]{Woocheol Choi\corref{Corr2}}
\ead{choiwc@skku.edu}
\cortext[Corr2]{Corresponding author.}
\address[SKKU1]{School of Mechanical Engineering, Sungkyunkwan University, Suwon, Gyeonggi-do, Republic of Korea}
\address[SKKU2]{Department of Mathematics, Sungkyunkwan University, Suwon, Gyeonggi-do, Republic of Korea}

\begin{abstract}
In this section, we consider the heat equation on a plate with thickness $h>0$ being heated by a heat source on upper and lower faces of the plate. We obtain an asymptotic profile of the solution as the thickness $h>0$ approaches to zero.
\end{abstract}

\begin{keyword}Heat equation \sep Thin plate \sep Focused surface heating \sep Robin boundary condition 
\MSC[2010]  35K05 \sep  35K08   \sep  35K20 
\end{keyword}

\end{frontmatter}


\section{Introduction}

Industries have used focused surface heating for engineering purposes. Food industry has employed infrared (IR) energy to heat surface of foods for blanching, drying, roasting, and thawing processes \cite{R,S}. Manufacturing industries also have done the IR surface heating for metal forming \cite{KLB, LYY2} and soldering \cite{AFM, L} processes. Many of these industrial applications have based on trial and error experiences, study of mathematical analysis has not been sufficiently reported. A few studies of the focused IR heating analysis are about design of the reflectors in IR heating \cite{LYY, LY}, multiphysics simulation \cite{LK}, and friction stir welding {HSBS}. Speciﬁcally, this problem is modelled by the following heat equation with Robin boundary condition
\begin{equation}\label{eq-1-1}
\begin{array}{rll}
\partial_t U(\mathbf{x},t) - \Delta U (\mathbf{x},t) &=0&~ \textrm{in}~\Omega_h \times [0,\infty)
\\
U(\mathbf{x},0)&=G(\mathbf{x})&~\textrm{on}~\Omega_h
\\
\frac{\partial U}{\partial x_3}(\mathbf{x},t) &=F(\mathbf{x},t)+a[T_0- U(\mathbf{x},t)] &~\textrm{on}~P \times \{h\} \times [0,\infty)
\\
\frac{\partial U}{\partial x_3}(\mathbf{x},t) & = -F(\mathbf{x},t) - a[T_1 - U(\mathbf{x},t)]&~\textrm{on}~P \times \{0\} \times [0,\infty)
\\
\frac{\partial U}{\partial \nu} (\mathbf{x},t)&=0& ~\textrm{on}~\partial P \times [0,h] \times [0,\infty),
\end{array}
\end{equation}
where $\Omega_{h} = P \times [0,h]$ is a plate with small thickness $h>0$ and  $P \subset \mathbb{R}^2$. In this paper, we are interested in asymptotic profile of solution $U(\mathbf{x},t)$ to \eqref{eq-1-1} as the thickness $h>0$ of the plate approaches to zero. In \eqref{eq-1-1} no heat change is assumed on $\partial P \times [0,h]$ as the surface measure of $\partial P \times [0,h]$ is much smaller than that of the upper and lower boundary $P \times \{0,h\}$. 

In the literature, there have been a lot of interest in studying a similar problem to \eqref{eq-1-1}, namely the reaction diffusion equation on thin domains with Neumann boundary condition
\begin{equation}\label{eq-1-51}
\begin{array}{rll}
\partial_t U(\mathbf{x},t) - \Delta U(\mathbf{x},t) &= f(U) (\mathbf{x},t)  &\textrm{in}~\Omega_h \times [0,\infty)
\\
U(\mathbf{x},0)&=G(\mathbf{x})&\textrm{on}~ \Omega_h 
\\
\frac{\partial U}{\partial \nu}(\mathbf{x},t) &=0& \textrm{on}~\partial \Omega_h \times [0,\infty).
\end{array}
\end{equation}
The mathematical analysis for the asymptotic limit $h\rightarrow 0$ of \eqref{eq-1-51} was initiated by Hale and Raugel \cite{HR1} where the authors raised a general question: \emph{If we consider an evolution equation on a spatial domain $\Omega$ such that $\Omega$ is small in a direction, is it possible to approximate the dynamics by an equation on a lower dimensional spatial domain?} 
This question was answered affirmatively for problem \eqref{eq-1-51} in \cite{HR1} and extended to various settings \cite{HR2, LLW, LWW, C, AP}. More precisely, Hale and Raguel \cite{HR1} showed that the solution $U(\mathbf{x},t)$ to \eqref{eq-1-51} is approximated by the two dimensional problem 
\begin{equation}\label{eq-1-52}
\begin{array}{rll}
\partial_t u(x,t) - \Delta u(x,t) &= f(u) (x,t)  &\textrm{in}~P \times [0,\infty)
\\
u(x,0)&=g(x)&\textrm{on}~P 
\\
\frac{\partial u}{\partial \nu}(x,t) &=0& \textrm{on}~\partial P  \times [0,\infty)
\end{array}
\end{equation}
as the thickness $h>0$ get close to zero, where $g: P \rightarrow \mathbb{R}$ is properly chosen in terms of $G$.

The problem \eqref{eq-1-1} models a thin plate with an heat source $F$ on boundary and the main concern is the effect of the heat source $F$ on temperature of plate when $h>0$ is very small. Therefore it is admissible to consider the case that the initial state $G$ is static, i.e., 
\begin{equation*}
\begin{array}{rll}
 - \Delta G(\mathbf{x}) &=0&~ \textrm{in}~\Omega_h 
\\
\frac{\partial G}{\partial x_3}(\mathbf{x}) &=a[T_0 - G(\mathbf{x})] &~\textrm{on}~P \times \{h\}
\\
\frac{\partial G}{\partial x_3}(\mathbf{x}) & = - a[T_1 - G(\mathbf{x})]&~\textrm{on}~P \times \{0\}
\\
\frac{\partial G}{\partial \nu}(\mathbf{x}) &=0& ~\textrm{on}~\partial P \times [0,h].
\end{array}
\end{equation*}
As far as we know, there has been no results on the asymptotic profile for the heat equation with the Robin boundary condition on thin plate.  We observe in problem  \eqref{eq-1-1} that the termperature on plate interacts with the outside termperature since the convection  coefficient $a>0$ is nonzero. Therefore the asymptotic behavior as $h\rightarrow 0$ should be different from the case $a=0$ because the interaction could effect more the temperature inside of the plate if the thickness of  plate is more thin. Now we state the main result of this paper. 
\begin{thm}\label{thm-1}Let $U \in C^2 ([0,\infty); \Omega_h)$ be a solution to \eqref{eq-1-1} with $F \in L^{\infty} ([0,\infty); \Omega_h)$. We assume that $h \in (0, 1/3a)$ and let $\alpha_1=\alpha_1 (h)$ be the smallest positive solution of
\begin{equation}
\tan (h q ) =  \frac{2 a q}{q^2 - a^2}.
\end{equation}
Then for each $(x,x_3) \in P \times [0,h]$ and $t\geq 0$, the solution $U$ satisfies
\begin{equation*}
\begin{split}
&\left|U((x,x_3),t) -\left( G(x,x_3) +  \frac{\alpha_1^2}{2a}\int_0^{t} \int_{P \times \{0,h\}} e^{-\alpha_1^2 (t-s)} W(x,t,y,s) F(y,y_3, s) dS_y ds\right) \right|
\\
&\quad \leq \frac{19 h}{3} \|F\|_{L^{\infty}([0,t])}
\end{split}
\end{equation*}
 where $W(x,t,y,s)$ denotes the Green's function of the heat equation on the two dimensional domain $P$ with Neumann boundary  condition
\begin{equation*}
\begin{array}{rll}
 \partial_t u(x,t)- \Delta u(x,t) &=0&~ \textrm{in}~P \times [0,\infty)
\\
u(x,0)&=g(x)&~\textrm{on}~P
\\
\frac{\partial u(x,t)}{\partial \nu} &=0& ~\textrm{on}~\partial P \times [0,\infty).
\end{array}
\end{equation*}
Here we denoted by $\int_{P \times \{0,h\}} f(y,y_3) dS_y$ the sum $\int_{P} f(y,0) dy + \int_{P} f(y,h) dy$ for integrable function $f: P \times\{0,h\} \rightarrow \mathbb{R}$ and $\|F\|_{L^{\infty}([0,t])}:= \sup_{(y,y_3, s) \in P \times \{0,h\} \times [0,t]} |F(y,y_3, s)|$.
\end{thm}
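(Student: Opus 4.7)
The approach is to reduce to homogeneous initial data and diagonalize in the thin direction. Let $V=U-G$; because $G$ is stationary with the same Robin coefficient $a$ and ambient temperatures $T_0,T_1$, a direct subtraction shows that $V$ solves the homogeneous heat equation in $\Omega_h$ with $V(\cdot,0)=0$ and the boundary conditions
\[
\partial_{x_3}V+aV=F\ \text{on}\ P\times\{h\},\quad -\partial_{x_3}V+aV=F\ \text{on}\ P\times\{0\},\quad \partial_\nu V=0\ \text{on}\ \partial P\times[0,h].
\]

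Next I build the orthonormal basis $\{\varphi_n\}_{n\ge 1}$ of $L^2(0,h)$ consisting of eigenfunctions of $-\partial_{x_3}^2$ with the homogeneous Robin conditions $\varphi_n'(0)=a\varphi_n(0)$ and $\varphi_n'(h)=-a\varphi_n(h)$. Each eigenfunction has the form $\varphi_n(x_3)=A_n\bigl[\cos(\alpha_n x_3)+(a/\alpha_n)\sin(\alpha_n x_3)\bigr]$, where $\alpha_n$ is the $n$-th positive root of $\tan(qh)=2aq/(q^2-a^2)$ (so $\alpha_1$ is precisely the one in the statement) and $A_n^2=2\alpha_n^2/[(\alpha_n^2+a^2)h+2a]$ by $L^2(0,h)$ normalization. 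Expanding $V(x,x_3,t)=\sum_n u_n(x,t)\varphi_n(x_3)$, testing the PDE against $\varphi_n$ in the $x_3$ variable, and using that the boundary contributions produced by the inhomogeneous Robin data on $V$ cancel exactly against those produced by the Robin conditions on $\varphi_n$, one obtains the two-dimensional heat equation with absorption
\[
\partial_t u_n-\Delta_x u_n+\alpha_n^2 u_n=F(\cdot,h,t)\varphi_n(h)+F(\cdot,0,t)\varphi_n(0)
\]
on $P$, with Neumann condition on $\partial P$ and $u_n(\cdot,0)=0$. Duhamel's formula with the Green's function $W$ yields the exact representation
\[
u_n(x,t)=\int_0^t\!\int_{P\times\{0,h\}}e^{-\alpha_n^2(t-s)}W(x,t,y,s)\,\varphi_n(y_3)F(y,y_3,s)\,dS_y\,ds.
\]

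The claim now splits into (a) showing that the $n=1$ term of the series differs from the leading expression in the theorem by at most a constant times $h\|F\|_{L^\infty([0,t])}$ and (b) bounding the tail $\sum_{n\ge 2}\varphi_n(x_3)u_n(x,t)$ by the same. For (a) the essential estimate is
\[
\Bigl|\varphi_1(x_3)\varphi_1(y_3)-\frac{\alpha_1^2}{2a}\Bigr|\le C\alpha_1^2 h\qquad\text{for all }x_3\in[0,h],\ y_3\in\{0,h\},
\]
which I would prove by reading off the asymptotics of $\alpha_1$ from the transcendental equation: $\alpha_1 h\le\tan(\alpha_1 h)$ gives $(\alpha_1^2-a^2)h\le 2a$, and $\tan(\alpha_1 h)\le\alpha_1 h/\cos^2(\alpha_1 h)$ combined with $(\alpha_1 h)^2\le 2ah+(ah)^2\le 7ah/3$ (using $ah<1/3$) gives $(\alpha_1^2-a^2)h\ge 2a\cos^2(\alpha_1 h)\ge 2a-2a(\alpha_1 h)^2$. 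These bounds control $|A_1^2-\alpha_1^2/(2a)|$ by $C\alpha_1^2 h$, and combining with the elementary $|\cos(\alpha_1 x_3)-1|\le(\alpha_1 h)^2/2$ and $|\sin(\alpha_1 x_3)|\le\alpha_1 h$ upgrades this to the claimed pointwise bound on $\varphi_1(x_3)\varphi_1(y_3)$. The time integral $\int_0^t e^{-\alpha_1^2(t-s)}\,ds\le\alpha_1^{-2}$ and the conservation property $\int_P W(x,t,y,s)\,dy=1$ then convert the pointwise bound into an $O(h)\|F\|_{L^\infty([0,t])}$ error. For (b), inspecting the positions of the positive roots of the transcendental equation shows $\alpha_n\ge (n-1)\pi/h$ for $n\ge 2$, and the normalization formula gives $A_n^2\le 2\alpha_n^2/(\alpha_n^2 h)=2/h$; hence $\|\varphi_n\|_\infty$ is of order $h^{-1/2}$ uniformly in $n\ge 2$, and each term of the tail is controlled by $C\|F\|_\infty/(h\alpha_n^2)\le Ch\|F\|_\infty/((n-1)^2\pi^2)$. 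Summing via $\sum_{k\ge 1}k^{-2}=\pi^2/6$ produces a tail at most $(2/3)h\|F\|_{L^\infty([0,t])}$.

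The genuine obstacle is the bookkeeping in step (a) required to produce the specific numerical constant $19/3$: one must be careful not to lose a power of $h$ when replacing $\varphi_1$ by its boundary values, and the $y_3=0$ and $y_3=h$ contributions must be handled with comparable precision because the expression for $\varphi_1$ is not symmetric about $x_3=h/2$. The tail estimate is routine once the eigenvalue separation $\alpha_n\ge(n-1)\pi/h$ is established.
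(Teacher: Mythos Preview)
Your proposal is correct and follows essentially the same approach as the paper: your eigenfunction expansion $V=\sum_n u_n\varphi_n$ is exactly the paper's Green's function series $K_h=W\cdot\sum_m P_m$ with $P_m(z,t,w,s)=e^{-\alpha_m^2(t-s)}\varphi_m(z)\varphi_m(w)$, and your steps (a) and (b) correspond precisely to the paper's lemmas on $P_1$ and on the tail $m\ge 2$ (same root-location bound $\alpha_m\ge(m-1)\pi/h$, same use of $\int_P W\,dy=1$, same summation via $\sum k^{-2}=\pi^2/6$). The paper's only additional content beyond your sketch is the explicit constant-tracking, which yields $5h$ from the $m=1$ term and $\tfrac{4}{3}h$ from the tail, giving $5+\tfrac{4}{3}=\tfrac{19}{3}$.
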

For the value $\alpha_1 = \alpha_1 (h)$ defined in Theorem \ref{thm-1}, we will prove that it satisfies $\lim_{h\rightarrow 0} \frac{\alpha_1 (h)}{\sqrt{2a/h}} =1$ (see  Lemma \ref{lem-3-3}). Based on this property and the integral represntation of Theorem \ref{thm-1}, we will investigate the effect of the thickness $h>0$ and the material property of the plate on the focused IR heating.

In order to prove Theorem \ref{thm-1}, we consider the Green's function $K_h$ associated to \eqref{eq-1-1} and write the solution  $U(\mathbf{x},t)$ to \eqref{eq-1-1} in terms of $K_h$.  The Green's function $K_h$ is known to admits a series expansion of which term depends on $h>0$ and the proof of of Theorem \ref{thm-1} is reduced to study the asymptotic profile of the series expansion of $K_h$ when $h>0$ is small.  We will show that the first term of the series is dominant and the contribution of the other terms can be estimates as $O(h)$.


This paper is organized as follows. In Section 2, we recall the Green's function associated to \eqref{eq-1-1} and study its series expansion. In Section 3 we obtain estimates on the terms in the expansion. This will enable us to prove Theorem \ref{thm-1}.

\section{Green's formula}

In this section, we study the Green's function $K_h(\mathbf{x},t,\mathbf{y},s) :\Pi_h \rightarrow \mathbb{R}^{+}$ associated to \eqref{eq-1-1} defined on 
\begin{equation*}
\Pi_h = \{ (\mathbf{x},t,\mathbf{y},s) \in (\Omega_h \times [0,\infty) )^2 ~:~ t>s\}
\end{equation*}
satisfying for each $(\mathbf{x},t) \in \Omega_h \times [0,\infty)$ that
\begin{equation*}
\begin{array}{rll}
\partial_s K_h (\mathbf{x},t,\mathbf{y},s) - \Delta_y K_h (\mathbf{x},t,\mathbf{y},s) &=0& ~\textrm{in}~\Omega_h \times [0,T]
\\
\frac{\partial K_h}{\partial n_y} (\mathbf{x},t,\mathbf{y},s) + a K_h (\mathbf{x},t,\mathbf{y},s) &=0&~\textrm{on}~(y,s) \in P \times \{0,h\}
\\
\frac{\partial K_h}{\partial n_y} (\mathbf{x},t,\mathbf{y},s) &=0& ~\textrm{on}~\partial P \times [0,h] \times [0,T]
\end{array}
\end{equation*}
and that 
\begin{equation*}
\lim_{t \rightarrow s} K_h (\mathbf{x},t,\mathbf{y},s) = \delta_\mathbf{x} (\mathbf{y}).
\end{equation*}
The solution $U(\mathbf{x},t)$ to \eqref{eq-1-1} is then written as
\begin{equation*}
\begin{split}
U(\mathbf{x},t) =& \int_{\Omega_h} K_h (\mathbf{x},t,\mathbf{y},0) G(\mathbf{y}) d\mathbf{y} 
\\
&\quad+ \int_0^t \int_{\partial P \times \{h\}}  K_h (\mathbf{x},t,\mathbf{y},s) [F(\mathbf{y},s)+a T_0] dS_y ds
\\
&\quad + \int_0^t \int_{\partial P \times \{0\}}  K_h (\mathbf{x},t,\mathbf{y},s) [F(\mathbf{y},s)+a T_1] dS_y ds.
\end{split}
\end{equation*}
Since $G(\mathbf{x})$ is a static state of \eqref{eq-1-1}, we have
\begin{equation}\label{eq-1-2}
\begin{split}
U(\mathbf{x},t) =& G(\mathbf{x}) + \int_0^t \int_{\partial P \times \{0,h\}}  K_h (\mathbf{x},t,\mathbf{y},s) [F(\mathbf{y},s)] dS_y ds.
\end{split}
\end{equation}
As the domain $\Omega_h$ equals to the product $P \times [0,h]$, the Green's function $K_h$ is also a product of two Green's functions corresponding to $P$ and $[0,h]$ described as follows. 

Let $\Psi =\{ (x,t,y,s) \in (P \times [0,T])^2~:~ t>s\}$ and $W: \Psi \rightarrow \mathbb{R}^{+}$ be the Green's function to the problem 
\begin{equation}\label{eq-1-75}
\begin{array}{rll}
\partial_t u(x,t) - \Delta u (x,t) &=0&~ (x,t) \in P \times (0,T)
\\
u(x,0)& =g(x)&~x \in P
\\
\frac{\partial u}{\partial n} (x,t)&=0& ~\textrm{on}~\partial P \times [0,T].
\end{array}
\end{equation}
The existence of the Green's function for the above problem was proved in \cite{CK} for any smooth domain $P \subset \mathbb{R}^2$. Next we consider $\Phi_h := \{ (z,t,w,s) \in ([0,h]\times [0,\infty))^2 : t>s\}$ and the Green's function $G_h : \Phi_h \rightarrow \mathbb{R}^{+}$ to the problem 
\begin{equation*}
\begin{array}{rll}
\partial_t u(z,t) - \Delta u (z,t)&=0&~(z,t) \in [0,h] \times [0,T]
\\
u(z,0)&=g(z)&~z \in [0,h]
\\
\frac{\partial u}{\partial \nu}(z,t) + au(z,t)& =f(z,t)&~(z,t) \in \{0,h\}\times [0,T].
\end{array}
\end{equation*}
The explicit formula of $G_h$ was obtained in \cite{BCHL} as in \eqref{eq-1-4}. 
Now we can state the product formula of $K_h$ appeared in \cite{HSBS}:
\begin{equation}\label{eq-1-3}
K_h (x,x_3, t, y, y_3, s) = W(x,t,y,s) G_h (x_3, t, y_3, s).
\end{equation}
Here $x,y \in \mathbb{R}^2$ and $x_3, y_3 \in \mathbb{R}$.
In order to study the asymptotic behavior of $U(\mathbf{x},t)$ with the formula \eqref{eq-1-2}, we shall investigate the asymptotic behavior of $K_h$. In view of \eqref{eq-1-3}, it is reduced to study the behavior of $G_h$ for small $h>0$. By using the formula of $G_h$ obtained  in \cite{BCHL} we have the following lemma.
\begin{lem}\label{lem-2-1}We have
\begin{equation}\label{eq-1-4}
G_h (z,t,w,s) =\sum_{m=1}^{\infty}P_m (z,t,w,s),
\end{equation}
where for each $m \in \mathbb{N}$,
\begin{equation}\label{eq-1-17}
P_m (z,t,w,s)=\frac{2e^{-\alpha_m^2 (t-s)}[ \alpha_m \cos (\alpha_m z) + a \sin (\alpha_m z)][\alpha_m \cos (\alpha_m w) + a \sin (\alpha_m w)]}{2a + h (a^2 + \alpha_m^2)},
\end{equation}
and  $\alpha_m$ is the $m$-th positive solution $q>0$ of equation
\begin{equation}\label{eq-1-19}
\tan (h q ) =  \frac{2 a q}{q^2 - a^2},
\end{equation}
arranged in increasing order. 
\end{lem}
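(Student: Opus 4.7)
The plan is to obtain the series for $G_h$ by separation of variables, reducing the problem on $[0,h]$ to the eigenvalue problem for the Sturm--Liouville operator $-\partial_z^2$ with Robin boundary conditions. Writing $u(z,t) = e^{-\alpha^{2}(t-s)}\phi(z)$ and observing that for the Green's function the boundary condition $\partial_\nu G_h + aG_h = 0$ translates (since the outward normals at $z=0$ and $z=h$ point in the $-\hat z$ and $+\hat z$ directions) into $\phi'(0) = a\phi(0)$ and $\phi'(h) = -a\phi(h)$, I look for $\phi(z) = A\cos(\alpha z) + B\sin(\alpha z)$. The condition at $z=0$ forces $B\alpha = aA$, so up to an overall constant $\phi(z) = \alpha\cos(\alpha z) + a\sin(\alpha z)$, which is exactly the shape appearing in \eqref{eq-1-17}.

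Next I impose the condition at $z=h$. Substituting $\phi(z)$ and $\phi'(z)$ and clearing the common factor gives
\begin{equation*}
(a^{2}-\alpha^{2})\sin(\alpha h) + 2a\alpha\cos(\alpha h) = 0,
\end{equation*}
which rearranges to the characteristic equation $\tan(h\alpha) = 2a\alpha/(\alpha^{2}-a^{2})$ stated in \eqref{eq-1-19}; its positive roots, listed in increasing order, are the $\alpha_m$. Since the problem is a regular Sturm--Liouville problem on $[0,h]$ with separated self-adjoint Robin conditions, the eigenfunctions $\phi_m(z)=\alpha_m\cos(\alpha_m z)+a\sin(\alpha_m z)$ form a complete orthogonal basis of $L^2[0,h]$, and the standard spectral representation of the heat kernel gives
\begin{equation*}
G_h(z,t,w,s) \;=\; \sum_{m=1}^{\infty} \frac{\phi_m(z)\phi_m(w)}{\|\phi_m\|_{L^{2}[0,h]}^{2}}\, e^{-\alpha_m^{2}(t-s)}.
\end{equation*}

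The remaining step is to identify the denominator $2a + h(a^{2}+\alpha_m^{2})$ in \eqref{eq-1-17} with $2\|\phi_m\|_{L^{2}[0,h]}^{2}$. I would expand $\int_{0}^{h}[\alpha_m\cos(\alpha_m z)+a\sin(\alpha_m z)]^{2}\,dz$ using the half-angle identities, producing one term equal to $h(\alpha_m^{2}+a^{2})/2$ plus a remainder of the form
\begin{equation*}
\frac{(\alpha_m^{2}-a^{2})\sin(2\alpha_m h)}{4\alpha_m} + \frac{a\bigl(1-\cos(2\alpha_m h)\bigr)}{2}.
\end{equation*}
The double-angle identity $\sin(2\alpha_m h)=2\sin(\alpha_m h)\cos(\alpha_m h)$ combined with the characteristic relation $(\alpha_m^{2}-a^{2})\sin(\alpha_m h)=2a\alpha_m\cos(\alpha_m h)$ converts the first summand into $a\cos^{2}(\alpha_m h)$; the second simplifies to $a\sin^{2}(\alpha_m h)$, and the two together contribute a clean $a$. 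This yields $\|\phi_m\|^{2}=[2a+h(a^{2}+\alpha_m^{2})]/2$ and the lemma follows.

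The only nontrivial step is the last one: the brute-force computation of $\|\phi_m\|^{2}$ does not collapse on its own, and one must use the transcendental equation \eqref{eq-1-19} at exactly the right moment to make the trigonometric remainder telescope to the constant $a$. Completeness of $\{\phi_m\}$ I would simply invoke from classical Sturm--Liouville theory; alternatively one may cite the explicit formula already derived in \cite{BCHL} and merely verify that its terms can be rewritten in the form \eqref{eq-1-17}.
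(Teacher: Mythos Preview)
Your proof is correct and complete: the separation-of-variables derivation of the eigenfunctions, the characteristic equation, and the computation of $\|\phi_m\|^{2}$ using \eqref{eq-1-19} all check out. The paper, however, takes a shorter route: it simply quotes the ready-made formula for $G_h$ from \cite[p.~605]{BCHL}, which is stated on the unit interval with parameter $B=ah$ and eigenvalues $\beta_m$, and then performs the rescaling $\alpha_m=\beta_m/h$ to rewrite each term in the form \eqref{eq-1-17} and the eigenvalue equation in the form \eqref{eq-1-19}. So the paper's argument is essentially the ``alternatively'' clause you mention at the end, carried out explicitly. Your approach is more self-contained and explains \emph{why} the denominator $2a+h(a^{2}+\alpha_m^{2})$ arises (as twice an $L^{2}$-norm, collapsing via the characteristic equation), at the cost of redoing a standard Sturm--Liouville computation; the paper's approach is quicker but opaque, relying entirely on the cited reference for both existence of the expansion and the specific constants.
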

\begin{proof}
We recall from \cite[605 page]{BCHL} the formula of $G_h$ given as
\begin{equation}\label{eq-1-4}
G_h (z,t,w,s) =\sum_{m=1}^{\infty}P_m (z,t,w,s),
\end{equation}
where
\begin{equation}\label{eq-1-53}
\begin{split}
P_m (z,t,w,s) =&\frac{2}{h} e^{-\beta_m^2 (t-s)/h^2}[ \beta_m \cos (\beta_m z/h) + B \sin (\beta_m z/h)] \\
&\times \frac{[\beta_m \cos (\beta_m w/h) + B \sin (\beta_m w/h)]}{(\beta_m^2 + B^2) [ 1+ B/(\beta_m^2 + B^2)]+B}.
\end{split}
\end{equation}
Here $\beta_m$ are positive solutions to 
\begin{equation}\label{eq-1-16}
\tan \beta_m = \frac{2 \beta_m B}{\beta_m^2 - B^2}\quad \textrm{with}~ B= ah,
\end{equation}
arranged in increasing order for $m \in \mathbb{N}$. Letting $\alpha_m = \beta_m /h$ in \eqref{eq-1-53}, we find that
\begin{equation*}
\begin{split}
&P_m (z,t,w,s)
\\
& = \frac{2h e^{-\alpha_m^2 (t-s)}[ \alpha_m \cos (\alpha_m z) + a \sin (\alpha_m z)][\alpha_m \cos (\alpha_m w) + a \sin (\alpha_m w)]}{2ah + h^2 (a^2 + \alpha_m^2)}
\\
&=\frac{2e^{-\alpha_m^2 (t-s)}[ \alpha_m \cos (\alpha_m z) + a \sin (\alpha_m z)][\alpha_m \cos (\alpha_m w) + a \sin (\alpha_m w)]}{2a + h (a^2 + \alpha_m^2)}. 
\end{split}
\end{equation*}
Also equation \eqref{eq-1-16} is written as
\begin{equation*}
\tan (h \alpha_m ) = \frac{2h^2 a \alpha_m}{h^2 \alpha_m^2 - h^2 a^2} =  \frac{2 a \alpha_m}{\alpha_m^2 - a^2}.
\end{equation*}
The proof is finished.
\end{proof}

\section{Estimates for the asymptotic formula}
In this section, we obtain the estimates for the terms in the expansion of $G_h$ given by \eqref{eq-1-4}. First we shall show that the effect of $P_m$ in \eqref{eq-1-4} with $m \geq 2$ to the solution $U$ of \eqref{eq-1-1} are relatively very small when $h>0$ is close to zero. For this aim we begin with the following lemma. 
\begin{lem}\label{lem-1-1} For $m \geq 2$ we have
\begin{equation*}
\int_0^{t} \int_{P \times \{0,h\}} W(x,t,y,s)P_m (x_3, t,y_3,s) F(y,y_3, s) dS_y ds \leq \frac{8}{h \alpha_m^2}\|F\|_{L^{\infty}( [0,t])}.
\end{equation*}
\end{lem}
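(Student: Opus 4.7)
The plan is to take absolute values inside the integral and decouple the dependencies: the Neumann heat kernel $W$ integrates to $1$ in its spatial variable, while $P_m$ admits a pointwise bound that is uniform in $z,w \in [0,h]$ and decays exponentially in $t-s$. With these two ingredients in hand, the estimate follows from Fubini and an elementary time integration.

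The main step is a pointwise bound on $P_m$. Using $|\alpha_m \cos\theta + a\sin\theta| \leq \alpha_m + a$ on each trigonometric factor in \eqref{eq-1-17} together with the elementary inequality $(\alpha_m + a)^2 \leq 2(\alpha_m^2 + a^2)$, the numerator is at most $4 e^{-\alpha_m^2(t-s)}(\alpha_m^2 + a^2)$. Since $2a + h(a^2+\alpha_m^2) \geq h(a^2 + \alpha_m^2)$, I would obtain
\begin{equation*}
|P_m(z,t,w,s)| \leq \frac{4 e^{-\alpha_m^2 (t-s)}}{h} \qquad \textrm{for all } z,w \in [0,h].
\end{equation*}
This is the key inequality; it is precisely the lower bound on the denominator of $P_m$ that transfers into the $1/h$ factor appearing in the conclusion.

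Next I would use that the Neumann heat kernel on $P$ satisfies $\int_P W(x,t,y,s)\, dy = 1$, which follows from the fact that $u\equiv 1$ is the solution of \eqref{eq-1-75} with initial data $g \equiv 1$. Writing $\int_{P\times\{0,h\}}dS_y$ as the sum of two integrals over $P$ at heights $y_3=0$ and $y_3=h$, bounding $|F(y,y_3,s)| \leq \|F\|_{L^\infty([0,t])}$, and applying Fubini to integrate $W$ in $y$ first, I arrive at
\begin{equation*}
\int_0^t \int_{P\times\{0,h\}} |W(x,t,y,s)|\, |P_m(x_3,t,y_3,s)|\, |F(y,y_3,s)|\, dS_y\, ds \leq \frac{8\,\|F\|_{L^\infty([0,t])}}{h} \int_0^t e^{-\alpha_m^2(t-s)}\, ds,
\end{equation*}
where the factor $2$ (yielding $8/h$ rather than $4/h$) comes from summing the two surface contributions at $y_3 = 0$ and $y_3 = h$. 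The remaining time integral is at most $1/\alpha_m^2$, which gives the claimed bound. I do not anticipate any genuine obstacle in this argument; the only conceptually important point is the lower bound on the denominator of $P_m$, which is what converts the smallness of $h$ into the $1/h$ gain in the final inequality.
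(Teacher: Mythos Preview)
Your proposal is correct and follows essentially the same route as the paper: the same pointwise bound $|P_m|\le \frac{4}{h}e^{-\alpha_m^2(t-s)}$ via $(\alpha_m+a)^2\le 2(\alpha_m^2+a^2)$ and $2a+h(a^2+\alpha_m^2)\ge h(a^2+\alpha_m^2)$, the same use of $\int_P W\,dy=1$, the factor $2$ from the two boundary faces, and the final time integration bounded by $1/\alpha_m^2$.
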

\begin{proof}
We estimate \eqref{eq-1-17} as follows
\begin{equation}\label{eq-1-14}
\begin{split}
P_m (x_3,t,y_3,s)& \leq \frac{2 e^{-\alpha_m^2 (t-s)} (\alpha_m + a)^2}{2a + h (a^2 + \alpha_m^2)}
\\
&\leq \frac{4 e^{-\alpha_m^2 (t-s)}(\alpha_m^2 + a^2)}{h (a^2 +\alpha_m^2)} = \frac{4}{h} e^{-\alpha_m^2 (t-s)}.
\end{split}
\end{equation}
Using this we obtain
\begin{equation}\label{eq-1-15}
\begin{split}
A&:=\int_0^{t} \int_{P \times \{0,h\}} W(x,t,y,s)P_m (x_3, t,y_3,s) F(y,y_3, s) dS_y ds 
\\
&~\leq\|F\|_{L^{\infty}( [0,t])}\int_0^{t} \int_{P \times \{0,h\}} W(x,t,y,s) \frac{4}{h} e^{-\alpha_m^2 (t-s)} dS_y ds
\end{split}
\end{equation}
In view of the fact that taking $g\equiv 1$ in \eqref{eq-1-75} implies $u(x,t)\equiv 1$, one has
\begin{equation}\label{eq-1-76}
\int_{P} W(x,t,y,s) dy =1.
\end{equation}
Using this in \eqref{eq-1-15} we obtain
\begin{equation*}
\begin{split}
A&\leq 2 \|F\|_{L^{\infty}( [0,t])} \int_0^{t}  \frac{4}{h} e^{-\alpha_m^2 s} ds
\\
& \leq 2\|F\|_{L^{\infty}( [0,t])} \int_0^{t} \frac{4}{h} e^{-\alpha_m^2 s} ds  =  \frac{8 \|F\|_{L^{\infty}( [0,t])}}{h\alpha_m^2} \int_0^{\alpha_m^2 t} e^{-s} ds
\\
& \leq \frac{8\|F\|_{L^{\infty}( [0,t])}}{h \alpha_m^2}.
\end{split}
\end{equation*}
The proof is finished.
\end{proof}
Next we find the following estimates on $\alpha_m$ for $m \geq 2$. 
\begin{lem}\label{lem-3-2}Assume that $h < \frac{\pi}{2a}$. For $m \geq 2$, we have $\alpha_m \in \left[ \frac{(m-1)\pi}{h}, \frac{(m-1)\pi}{h} + \frac{\pi}{2h}\right]$.
\end{lem}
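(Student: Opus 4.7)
Proof plan for Lemma \ref{lem-3-2}.

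First I would reduce to the scaled equation derived in Lemma \ref{lem-2-1}. Setting $\theta = hq$ and $B = ah$, the equation \eqref{eq-1-19} becomes
\begin{equation*}
\tan\theta = \frac{2B\theta}{\theta^2 - B^2} =: f(\theta),
\end{equation*}
and if $\beta_m = h\alpha_m$ denote the positive solutions in increasing order, it suffices to show $\beta_m \in [(m-1)\pi, (m-1)\pi + \pi/2]$ for $m\ge 2$. The hypothesis $h < \pi/(2a)$ translates into $B < \pi/2$, which will keep the right-hand side well-behaved on every interval of interest.

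Next I would establish the qualitative properties of $f$ on $(B,\infty)$. A direct computation gives
\begin{equation*}
f'(\theta) = -\frac{2B(\theta^2 + B^2)}{(\theta^2 - B^2)^2} < 0,
\end{equation*}
so $f$ is strictly positive and strictly decreasing on $(B,\infty)$, with $f(B^+) = +\infty$ and $f(\theta) \to 0^+$ as $\theta\to\infty$. Since $B < \pi/2$, for every integer $k \geq 1$ the whole interval $[(k-1)\pi, k\pi]$ lies in $(B,\infty)$, and in particular at the endpoints $(m-1)\pi$ and $(m-1)\pi + \pi/2$ the value $f$ is a strictly positive finite number.

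Then I would perform an interval-by-interval comparison of $\tan$ and $f$ for $m \geq 2$. On the interval $I_m := ((m-1)\pi, (m-1)\pi + \pi/2)$, $\tan\theta$ is strictly increasing from $0$ to $+\infty$, while $f$ is strictly decreasing and positive; at the left endpoint $\tan = 0 < f$, and near the right endpoint $\tan \to +\infty > f$, so monotonicity plus the intermediate value theorem yield exactly one solution in $I_m$. On the complementary interval $J_m := ((m-1)\pi + \pi/2, m\pi)$, $\tan\theta$ is negative while $f$ is positive, so no solution lies in $J_m$. Finally, on the initial interval $(0, \pi/2)$ a similar monotonicity argument (using $f(B^+) = +\infty$ and $\tan$ increasing from $\tan B$ to $+\infty$) gives exactly one solution, which must be $\beta_1$.

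Counting the solutions in order, $\beta_m$ for $m \geq 2$ is therefore the unique solution in $I_m$, giving $\beta_m \in ((m-1)\pi, (m-1)\pi + \pi/2)$ and hence $\alpha_m = \beta_m / h \in ((m-1)\pi/h, (m-1)\pi/h + \pi/(2h))$ as claimed. The only real obstacle is the bookkeeping — making sure the indices match, i.e., that no solutions are missed in the "bad" intervals $J_m$ and that each $I_m$ really contains exactly one root; both are settled by the sign/monotonicity comparison that relies crucially on $B < \pi/2$, which is precisely why the hypothesis $h < \pi/(2a)$ is imposed.
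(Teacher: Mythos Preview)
Your proof is correct and follows essentially the same approach as the paper: analyze the sign and monotonicity of the rational function $f(\theta) = 2B\theta/(\theta^2 - B^2)$ (equivalently $\Phi(q) = 2aq/(q^2 - a^2)$ in the paper's unscaled variables), compare it with the periodic behavior of $\tan$, and use the intermediate value theorem on each branch to locate exactly one root per period. One minor slip: the claim that $[(k-1)\pi,\,k\pi]\subset (B,\infty)$ for every $k\ge 1$ fails at $k=1$, but since you only invoke it for $m\ge 2$ and treat the initial interval $(0,\pi/2)$ separately, the argument is unaffected.
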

\begin{proof}
For $q \geq 0$ we let $\Phi(q) = \frac{2a q}{q^2 -a^2}$. From \eqref{eq-1-19} we see that $\alpha_m$ is the $m$-th positive solution of 
\begin{equation*}
\tan (hq) = \Phi(q).
\end{equation*}
Using an elementary calculus, we find that
\begin{itemize}
\item For $q \in (0,a)$, the function $\Phi(q)$ is negative and decreasing function with 
\begin{equation}\label{eq-1-11}
\Phi(0) = 0\quad \textrm{and}\quad \lim_{q \rightarrow a^{-}}\Phi(q) = -\infty.
\end{equation}
\item For $q \in (a, \infty)$, the function $\Phi(q)$ is positive and decreasing function with 
\begin{equation}\label{eq-1-12}
\lim_{q \rightarrow a^{+}} \Phi(q) = \infty \quad \textrm{and}\quad\lim_{x \rightarrow \infty} \Phi(q) = 0.
\end{equation} 
\end{itemize}
By the way the funcction $q \rightarrow \tan (hq)$ is a periodic function with period $\pi/h$. Taking this account with \eqref{eq-1-11} and  \eqref{eq-1-12}, we find that
\begin{equation*}
\alpha_1 \in \left(0, \frac{\pi}{2h} \right)
\end{equation*}
and for $m \geq 2$, 
\begin{equation*}
\alpha_m \in \left( \frac{(m-1)\pi}{h}, \frac{(m-1)\pi}{h} + \frac{\pi}{2h}\right).
\end{equation*}
The proof is done.
\end{proof}
For the first solution $\alpha_1$ to \eqref{eq-1-19}, we have the following result. 
\begin{lem}\label{lem-3-3} Assume that $ha \leq 1$. Then we have  
\begin{equation}\label{eq-1-20}
\alpha_1 \leq \sqrt{a^2 + \frac{2a}{h}} \leq \frac{\sqrt{3a}}{\sqrt{h}}.
\end{equation}
In addition, if we further assume that $ha \leq 1/3$, then we have
\begin{equation*}
\sqrt{a^2 + \frac{2a}{h +2ah^2}} \leq \alpha_1
\end{equation*}
From the above estimates, we find that $\lim_{h \rightarrow 0} \frac{\alpha_1}{\sqrt{2a/h}} =1.$
\end{lem}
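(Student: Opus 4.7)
The plan is to sandwich $\alpha_1^2$ by applying elementary two-sided bounds for $\tan$ on $(0,\pi/2)$ to the defining identity
\[
\tan(h\alpha_1)=\frac{2a\alpha_1}{\alpha_1^2-a^2}.
\]
Inspection of the analysis in the proof of Lemma \ref{lem-3-2} actually places $\alpha_1$ in $(a,\pi/(2h))$ rather than just $(0,\pi/(2h))$: on $(0,a)$ one has $\tan(hq)>0>\Phi(q)$, so no crossing is possible there. In particular $h\alpha_1\in(0,\pi/2)$ and $\alpha_1^2-a^2>0$, which makes the identity above well defined and all subsequent sign manipulations legitimate.

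For the upper bound I would insert the elementary inequality $\tan y\ge y$ on $(0,\pi/2)$ with $y=h\alpha_1$; this yields $h\alpha_1\le 2a\alpha_1/(\alpha_1^2-a^2)$, and rearranging gives $\alpha_1^2\le a^2+2a/h$. The second inequality $\sqrt{a^2+2a/h}\le\sqrt{3a/h}$ is then just $a^2\le a/h$, i.e., $ah\le 1$.

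For the lower bound I plan to use the companion estimate $\tan y\le y/(1-y^2/2)$, which follows from $\sin y\le y$ and $\cos y\ge 1-y^2/2$ and is valid whenever $y^2<2$. The upper bound just obtained gives $(h\alpha_1)^2\le (ah)(ah+2)\le 7/9<2$ under $ah\le 1/3$, so the $\tan$ estimate is applicable at $y=h\alpha_1$. Substituting it into the defining identity, cancelling $\alpha_1>0$, and cross-multiplying (all quantities positive) yields the sharper estimate $\alpha_1^2(1+ah)\ge a^2+2a/h$. The stated conclusion $\alpha_1^2\ge a^2+2a/(h+2ah^2)$ then follows from this provided the one-variable polynomial inequality $2B^2+B\le 2$ holds for $B=ah$; this is comfortable when $ah\le 1/3$, since the positive root of $2B^2+B-2=0$ is $(\sqrt{17}-1)/4\approx 0.78$.

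The main technical difficulty is matching the exact constants in the lower bound, which forces both the restriction $ah\le 1/3$ and the use of the quadratic denominator in $\tan y\le y/(1-y^2/2)$ rather than a cruder linear estimate. The final limit statement is then a direct consequence of the sandwich: dividing $a^2+2a/(h+2ah^2)\le\alpha_1^2\le a^2+2a/h$ by $2a/h$, both ends tend to $1$ as $h\to 0$, yielding $\alpha_1/\sqrt{2a/h}\to 1$.
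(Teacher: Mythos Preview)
Your argument is correct and follows essentially the same strategy as the paper: sandwich $\tan(h\alpha_1)$ between elementary bounds and solve for $\alpha_1^2$. There are two minor technical differences worth noting. For the upper bound, the paper instead evaluates $\tan(hq)\ge hq$ at the target point $z_0=\sqrt{a^2+2a/h}$ and uses an intermediate-value comparison (with a small case split on whether $z_0<\pi/(2h)$); your direct substitution at $q=\alpha_1$ is cleaner and avoids that split, at the cost of first checking $\alpha_1>a$, which you do. For the lower bound, the paper uses $\tan z\le z+\tfrac{2}{3}z^3$ on $[0,1]$ and then bounds the cubic via $(h\alpha_1)^2\le 3ah$, which gives $\tan(h\alpha_1)\le \alpha_1(h+2ah^2)$ and hence the stated inequality in one step; your route through $\tan y\le y/(1-y^2/2)$ yields the sharper intermediate bound $\alpha_1^2(1+ah)\ge a^2+2a/h$ but then requires the extra polynomial check $2B^2+B\le 2$ to recover the paper's form. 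Both are fine; the paper's choice of cubic upper bound was tailored to land exactly on the denominator $h+2ah^2$.
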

\begin{proof}
Recall from Lemma \ref{lem-2-1} that $\alpha_1 >0$ is the smallest positive solution $q>0$ to 
\begin{equation}\label{eq-1-87}
\tan (hq) = \frac{2aq}{q^2 -a^2}.
\end{equation}
By an elementary calculus, the function $z \rightarrow \tan z$  has the following estimate
\begin{equation}\label{eq-r-1}
z + \frac{z^3}{3} \leq \tan z \leq z + \frac{2z^3}{3}\quad \textrm{for}~ z\in[0,1].
\end{equation}
We see that $z \rightarrow \tan (hz)$ is increasing for $z \in (0,\pi/2h)$. Let us set $z_0 = {\sqrt{\frac{2a}{h} +a^2}}$. If $z_0 < \frac{\pi}{2h}$ we have
\begin{equation}
\tan (hz_0)  \geq hz_0 = \frac{2az_0}{z_0^2 -a^2}.
\end{equation}
 Combining this with \eqref{eq-1-11} and \eqref{eq-1-12} we deduce $\alpha_1 \leq z_0$. In the case $z_0 \geq \frac{\pi}{2h}$, we have $\alpha_1 \leq z_0$ by Lemma \ref{lem-3-2}. Hence the first inequality of \eqref{eq-1-20} holds true. This also implies $\alpha_1 \leq \frac{\sqrt{3a}}{\sqrt{h}}$ because we have $a^2 \leq \frac{a}{h}$ from the condition $ha \leq 1$.

Assume that $ha \leq 1/3$. Then we have $h\alpha_1 \leq \sqrt{3ah} \leq 1$. Combining this with the second inequality of \eqref{eq-r-1}, we deduce
\begin{equation*}
\begin{split}
 \frac{2a\alpha_1}{\alpha_1^2 -a^2} = \tan (h\alpha_1) &\leq h\alpha_1+ \frac{2(h\alpha_1)^3}{3} \leq h\alpha_1 + 2h^2 a \alpha_1,
\end{split}
\end{equation*}
where we used $h \alpha_1 \leq \sqrt{3ah}$ in the second inequality. Rearranging this, we get
\begin{equation*}
a^2 + \frac{2a}{h+ 2h^2 a} \leq \alpha_1^2.
\end{equation*}
This completes the proof of this lemma.
\end{proof}

\begin{lem}For  $0 \leq z, w \leq h$ with $h \leq \frac{1}{3a}$ we have
\begin{equation}\label{eq-1-86}
\left| P_1 (z,t,w,s) - \frac{\alpha_1^2}{2a} e^{-\alpha_1^2 (t-s)} \right| \leq \frac{5}{2} \,h\alpha_1^2 e^{-\alpha_1^2 (t-s)}.
\end{equation}
\end{lem}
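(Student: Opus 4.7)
The plan is to control the numerator and denominator of the explicit formula \eqref{eq-1-17} for $P_1$ separately and then combine them. Throughout, I would use Lemma \ref{lem-3-3} in the form $h\alpha_1 \le \sqrt{3ah}\le 1$, which follows from the hypothesis $h\le 1/(3a)$, so that $\alpha_1 z,\alpha_1 w, h\alpha_1\in[0,1]$ and the Taylor-type estimate \eqref{eq-r-1} together with $|\cos r-1|\le r^2/2$ and $|\sin r|\le r$ are available.

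\textbf{Step 1: numerator.} Set $N(u):=\alpha_1\cos(\alpha_1 u)+a\sin(\alpha_1 u)$. First I would bound
\[
|N(u)-\alpha_1|\le \tfrac{1}{2}\alpha_1^{3}u^{2}+a\alpha_1 u
\]
and, using $u\le h$ together with $\alpha_1^{2}\le 3a/h$ from Lemma \ref{lem-3-3}, obtain $|N(u)-\alpha_1|\le C_1\,ah\alpha_1$ for an explicit $C_1$. Then via the algebraic identity
\[
N(z)N(w)-\alpha_1^{2}=\alpha_1\bigl(N(z)-\alpha_1\bigr)+\alpha_1\bigl(N(w)-\alpha_1\bigr)+\bigl(N(z)-\alpha_1\bigr)\bigl(N(w)-\alpha_1\bigr),
\]
I would deduce $|N(z)N(w)-\alpha_1^{2}|\le C_2\,ah\,\alpha_1^{2}$.

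\textbf{Step 2: denominator.} Rewriting \eqref{eq-1-19} as $h(\alpha_1^{2}-a^{2})=2ah\alpha_1/\tan(h\alpha_1)$, I would apply the two-sided bound $r\le \tan r\le r+\tfrac{2}{3}r^{3}$ from \eqref{eq-r-1} to get
\[
\bigl|h(\alpha_1^{2}-a^{2})-2a\bigr|=2a\,\frac{\tan(h\alpha_1)-h\alpha_1}{\tan(h\alpha_1)}\le \tfrac{4}{3}a(h\alpha_1)^{2}\le 4a^{2}h.
\]
Combining with the harmless term $2ha^{2}$, this yields $|D-4a|\le 6a^{2}h$, where $D:=2a+h(a^{2}+\alpha_1^{2})$ is the denominator of \eqref{eq-1-17}.

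\textbf{Step 3: combination.} Write
\[
\frac{2N(z)N(w)}{D}-\frac{\alpha_1^{2}}{2a}=\frac{2\bigl(N(z)N(w)-\alpha_1^{2}\bigr)}{D}+2\alpha_1^{2}\left(\frac{1}{D}-\frac{1}{4a}\right),
\]
bound $1/D\le 1/(2a)$, insert Steps 1 and 2, and multiply by $e^{-\alpha_1^{2}(t-s)}$ to get an estimate of the shape $C\,h\alpha_1^{2}e^{-\alpha_1^{2}(t-s)}$. The main obstacle is bookkeeping the numerical constants so that they combine to the claimed $5/2$; in particular this requires using the sharp form of \eqref{eq-r-1} in Step 2 and the sharp $\alpha_1^{2}\le 3a/h$ bound in Step 1 rather than looser versions, as well as exploiting $a^{2}\le a/h$ to reinterpret purely ``$a$-powered'' error terms as multiples of $h\alpha_1^{2}$.
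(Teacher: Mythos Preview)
Your overall strategy---control the numerator $N(z)N(w)$ and the denominator $D=2a+h(a^{2}+\alpha_1^{2})$ of \eqref{eq-1-17} separately via Lemma~\ref{lem-3-3} and the elementary trigonometric bounds---is exactly what the paper does. The difference is in the bookkeeping, and there the gap is real: as written, your absolute-value route cannot reach the constant $5/2$.

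In Step~1 your bound $|N(u)-\alpha_1|\le \tfrac12\alpha_1^{3}u^{2}+a\alpha_1 u$ adds the cosine error and the sine error, but these point in opposite directions (cosine makes $N$ smaller, sine makes it larger). With $u\le h$ and $\alpha_1^{2}h^{2}\le 3ah$ this gives $|N(u)-\alpha_1|\le \tfrac52 ah\alpha_1$, and then the product identity yields $|N(z)N(w)-\alpha_1^{2}|\le (5+\tfrac{25}{12})ah\alpha_1^{2}\approx 7.08\,ah\alpha_1^{2}$. Likewise in Step~2 the term $2ha^{2}$ and the term $h(\alpha_1^{2}-a^{2})-2a$ have opposite signs, so the triangle inequality bound $|D-4a|\le 6a^{2}h$ is three times too large; the true two-sided bound is $|D-4a|\le 2a^{2}h$. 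Plugging your stated constants into Step~3 with $1/D\le 1/(2a)$ gives roughly $8.6\,h\alpha_1^{2}$, not $\tfrac52 h\alpha_1^{2}$. Even after repairing Step~2 to $|D-4a|\le 2a^{2}h$ and using the sharper $D\ge 4a-2a^{2}h$, the additive decomposition with the Step~1 constant still overshoots.

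The paper avoids this by tracking upper and lower bounds separately throughout: it shows $\alpha_1(1-\tfrac32 ah)\le N(u)\le \alpha_1(1+ah)$ and $4a-2a^{2}h\le D\le 4a+2a^{2}h$, then bounds the \emph{ratio} $N(z)N(w)/D$ directly from each side. The upper-side error works out to $\tfrac{5h}{4}$ (for $N(z)N(w)/(2D)$), whence the constant $\tfrac52$ after multiplying by $2\alpha_1^{2}e^{-\alpha_1^{2}(t-s)}$. To reach the stated constant you need this one-sided tracking; the absolute-value-plus-triangle-inequality route, however sharpened, double-counts and cannot get there.
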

\begin{proof}

We recall from \eqref{eq-1-17} that $P_1 (z, t,w, s)$ is given by 
\begin{equation*}
P_1 (z,t,w,s)=\frac{2e^{-\alpha_1^2 (t-s)}[ \alpha_1 \cos (\alpha_1 z) + a \sin (\alpha_1 z)][\alpha_1 \cos (\alpha_1 w) + a \sin (\alpha_1 w)]}{2a + h (a^2 + \alpha_1^2)}.
\end{equation*}
Throughout the proof, we keep in mind that $\alpha_1^2 z^2 \leq \alpha_1^2 h^2 \leq 3ha \leq 1$ from Lemma \ref{lem-3-3}.
Since $1-\frac{v^2}{{2}} \leq \cos v \leq 1$ for $v \in \mathbb{R}$, we have 
\begin{equation*}
1 - \frac{\alpha_1^2 z^2}{2} \leq \cos (\alpha_1 z) \leq 1.
\end{equation*}
It then follows using $\alpha_1^2 z^2 \leq 3ha$ that 
\begin{equation*}
\alpha_1 \left(1-\frac{3ha}{2}\right) \leq \alpha_1 \cos (\alpha_1 z ) \leq \alpha_1,
\end{equation*}
which gives
\begin{equation*}
\alpha_1 \left(1-\frac{3ha}{2}\right) \leq \alpha_1 \cos (\alpha_1 z) + a \sin (\alpha_1 z) \leq \alpha_1 (1+ha),
\end{equation*}
where we used $0 \leq \alpha_1 z \leq h \alpha_1 \leq 1$. From this we obtain
\begin{equation}\label{eq-1-82} 
\alpha_1^2 \left(1-\frac{3ha}{2}\right)^2 \leq [\alpha_1 \cos (\alpha_1 z) +a \sin (\alpha_1 z)][\alpha_1 \cos (\alpha_1 w) + a \sin (\alpha_1 w)] \leq \alpha_1^2 (1+ha)^2.
\end{equation}
From Lemma \ref{lem-3-3} we find 
\begin{equation}\label{eq-1-61}
\frac{2a}{1 + 2ah} + a^2 h \leq h \alpha_1^2 \leq {2a} + a^2 h.
\end{equation}
Combining this with \eqref{eq-1-61} we find that $D:= 2a + h(a^2 +\alpha_1^2)$ satisfies
\begin{equation}\label{eq-1-62}
D \geq \left( \frac{2a}{1+2ah} + a^2 h \right) + 2a + ha^2 \geq 4a - 2a^2 h
\end{equation}
and
\begin{equation}\label{eq-1-63}
D \leq \left( {2a} + a^2 h \right) + 2a +ha^2 \leq 4a + 2a^2 h.
\end{equation}
Combining \eqref{eq-1-82} with \eqref{eq-1-62} and \eqref{eq-1-63} we deduce
\begin{equation}\label{eq-1-64}
2e^{-\alpha_1^2 (t-s)} \frac{\alpha_1^2 (1-2ha)^2}{4a + 2a^2 h} \leq P_1 (z,t,w,s) \leq 2e^{-\alpha_1^2 (t-s)} \frac{\alpha_1^2 (1+3ah)}{4a-2a^2 h}.
\end{equation}
Using that $a h \leq 1/3$ we have
\begin{equation*}
\frac{(1-2ha)^2}{4a + 2a^2h} \geq (1-4ha) \frac{1}{4a(1+ah/2)} \geq \frac{1}{4a} (1-4ha)(1-ah/2) \geq \frac{1}{4a} - \frac{9h}{16}.
\end{equation*}
Similarly,
\begin{equation*}
\frac{1+3ah}{4a-2a^2 h} = \frac{1+3ah}{4a (1-ah/2)} \leq \frac{1}{4a} \left[ (1+3ah) (1+ah)\right] \leq \frac{1}{4a} \left( 1+ 5ah\right) = \frac{1}{4a} + \frac{5h}{4}.
\end{equation*}
Gathering the above two estimates in \eqref{eq-1-64}, we obtain
\begin{equation*}
2e^{-\alpha_1^2 (t-s)} \alpha_1^2 \left( \frac{1}{4a} - \frac{9h}{16}\right) \leq P_1 (z,t,w,s) \leq 2e^{-\alpha_1^2 (t-s)}\alpha_1^2 \left( \frac{1}{4a} + \frac{5h}{4}\right).
\end{equation*}
From this we find
\begin{equation*}
\left| P_1 (z,t,w,s) - \frac{\alpha_1^2}{2a} e^{-\alpha_1^2 (t-s)} \right| \leq \frac{5}{2}\alpha_1^2 e^{-\alpha_1^2 (t-s)} h.
\end{equation*}
The proof is finished.
\end{proof}
\begin{lem}\label{lem-1-2}
Assume that $ah \leq 1/3$. Then we have
\begin{equation*}
\begin{split}
&\left| \int_0^t \int_{P \times \{0,h\}} W (x,t,y,s)P_1 (x_3,t,y_3,s) F(y,y_3,s)dS_y ds\right.
\\
& -\left.\int_0^{t} \int_{P \times \{0,h\}} 2e^{-\alpha_1^2 (t-s)} \frac{\alpha_1^2}{4a}W(x,t,y,s) F(y,y_3, s) dS_y ds \right| \leq 5h \|F\|_{L^{\infty}( [0,t])}.
\end{split}
\end{equation*}
\end{lem}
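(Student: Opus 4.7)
The plan is to combine the pointwise bound from the preceding lemma, equation \eqref{eq-1-86}, with the mass-one identity \eqref{eq-1-76} for $W$ and a straightforward time integral. The key observation is that $\frac{\alpha_1^2}{2a} = 2\cdot\frac{\alpha_1^2}{4a}$, so the second integrand in the statement is exactly $\frac{\alpha_1^2}{2a}e^{-\alpha_1^2(t-s)}\,W(x,t,y,s)\,F(y,y_3,s)$. Hence the difference inside the absolute value can be written as a single integral with integrand $W(x,t,y,s)\bigl[P_1(x_3,t,y_3,s)-\tfrac{\alpha_1^2}{2a}e^{-\alpha_1^2(t-s)}\bigr]F(y,y_3,s)$.

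First I would move the absolute value inside the integral, using positivity of $W$, and apply \eqref{eq-1-86} to bound the bracketed term by $\frac{5}{2}h\alpha_1^2 e^{-\alpha_1^2(t-s)}$. Second, I would pull $\|F\|_{L^\infty([0,t])}$ out of the $dS_y$ integration, so that the remaining spatial integral is $\int_{P\times\{0,h\}} W(x,t,y,s)\,dS_y$, which by the convention for the boundary integral and the identity \eqref{eq-1-76} equals $2$. Third, I would perform the time integral using
\begin{equation*}
\int_0^t e^{-\alpha_1^2(t-s)}\,ds = \frac{1-e^{-\alpha_1^2 t}}{\alpha_1^2}\leq \frac{1}{\alpha_1^2},
\end{equation*}
so that the factor $\alpha_1^2$ in the numerator cancels against the $\alpha_1^2$ in the denominator. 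Gathering the constants yields the bound $\tfrac{5}{2}h\cdot 2\cdot 1\cdot \|F\|_{L^\infty([0,t])} = 5h\|F\|_{L^\infty([0,t])}$, as claimed.

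There is no real obstacle here; the proof is essentially a reduction to the already-established pointwise estimate \eqref{eq-1-86}. The only point requiring a touch of care is keeping track of the normalization: one must notice the factor $2$ that the author introduced (writing $\frac{\alpha_1^2}{2a}$ as $2\cdot\frac{\alpha_1^2}{4a}$) and the factor $2$ coming from summing $W$ over both faces $P\times\{0\}$ and $P\times\{h\}$, so that the final numerical constant comes out exactly to $5$ rather than, say, $5/2$ or $10$.
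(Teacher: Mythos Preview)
Your proposal is correct and follows essentially the same approach as the paper's own proof: rewrite the second integrand as $\tfrac{\alpha_1^2}{2a}e^{-\alpha_1^2(t-s)}W F$, apply the pointwise bound \eqref{eq-1-86}, use \eqref{eq-1-76} together with the factor $2$ from the two faces, and integrate in time to cancel the $\alpha_1^2$. Your write-up is in fact slightly cleaner than the paper's, which leaves the absolute value outside at an intermediate step where it should really be moved inside first.
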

\begin{proof}
Using \eqref{eq-1-86} and \eqref{eq-1-76} we deduce
\begin{equation*}
\begin{split}
&\left| \int_0^t \int_{P \times \{0,h\}} W(x,t,y,s) P_1 (x_3, y, y_3, s) F(y,y_3, s) dS_y ds\right.
\\
&\quad  - \left.\int_0^t \int_{P \times \{0,h\}} \frac{\alpha_1^2}{2a} e^{-\alpha_1^2 (t-s)} W(x,t,y,s) F(y,y_3, s) dS_y ds \right|
\\
& \quad \leq \frac{5h}{2} \left| \int_0^t \int_{P \times \{0,h\}} \alpha_1^2 e^{-\alpha_1^2 (t-s)} W(x,t,y,s) F(y,y_3, s) dS_y ds \right|
\\
&\quad \leq 5h \|F\|_{L^{\infty}( [0,t])} \left( \int_0^{\infty} e^{-s} ds \right)
\\
& \quad = 5h \|F\|_{L^{\infty}( [0,t])}.
\end{split}
\end{equation*}
The proof is finished.
\end{proof}

Now we give the proof of Theorem \ref{thm-1}.
\begin{proof}
From \eqref{eq-1-2}, \eqref{eq-1-3}, and \eqref{eq-1-4} we have
\begin{equation}\label{eq-1-22}
\begin{split}
&U((x,x_3),t)
\\
& = G(x,x_3) +  \sum_{m=1}^{\infty} \int_0^t \int_{P \times \{0,h\}]} W(x,t,y,s) P_m (x_3,t,y_3,s) [F(y,y_3,s)] dS_y ds.
\end{split}
\end{equation}
Using Lemma \ref{lem-1-1} we deduce 
\begin{equation}\label{eq-1-70}
\begin{split}
&  \sum_{m=2}^{\infty} \int_0^t \int_{P \times \{0,h\}]} W(x,t,y,s) P_m (x_3,t,y_3,s) [F(y,y_3,s)] dS_y ds
\\
&\leq \sum_{m=2}^{\infty} \frac{8}{h \alpha_m^2}\|F\|_{L^{\infty}( [0,t])},
\end{split}
\end{equation}
and the estimate of Lemma \ref{lem-3-2} enables us to obtain
\begin{equation*}
\sum_{m=2}^{\infty} \frac{1}{h\alpha_m^2} \leq \sum_{m=2}^{\infty} \frac{h}{(m-1)^2 \pi^2} = \frac{h}{6}.
\end{equation*}
Gathering this together with \eqref{eq-1-70} and Lemma \ref{lem-1-2}, we finally deduce from \eqref{eq-1-22} the following estimate 
\begin{equation*}
\begin{split}
\left|U((x,x_3),t) - \left( G(x,x_3) +  \int_0^{t} \int_{P \times \{0\}} 2e^{-\alpha_1^2 (t-s)} \frac{\alpha_1^2}{4a}W(x,t,y,s) F(y,y_3, s) dS_y ds\right)\right| 
\\
\leq \left( 5+ \frac{4}{3}\right) h \|F\|_{L^{\infty}( [0,t])}.
\end{split}
\end{equation*}
The proof is finished.
\end{proof}



\begin{thebibliography}{25}

\bibitem{AFM} C. Anguiano, M. Félix, A. Medel, M. Bravo, D. Salazar, H. Márquez, H. Study of heating capacity of focused IR light soldering systems. Optics Express, 21(20) (2013), 23851--23865.

\bibitem{AP} F. Antoci, M. Prizzi, Reaction-diffusion equations on unbounded thin domains. Topol. Methods Nonlinear Anal.  18  (2001),  283--302.
 
 
\bibitem{BCHL} J.V. Beck, K.D. Cole, A. Haji-Sheikh, B. Litkouhi, Heat Conduction Using Green’s Functions, Hemisphere, New York, 2011. Second edition.


\bibitem{CK} J. Choi, S. Kim, Green's function for second order parabolic systems with Neumann boundary condition. J. Differential Equations  254  (2013),   2834--2860. 
 
\bibitem{C} I. Ciuperca, Reaction-diffusion equations on thin domains with varying order of thinness. J. Differential Equations  126  (1996),   244--291. 





\bibitem{HSBS} M. Haghpanahi, S. Salimi, P. Bahemmat, S. Sima, A spectral model for heat transfer with friction heat gain in geothermal borehole heat exchangers. Appl. Math. Model.  37  (2013),  9865--9884.

\bibitem{HR1} J. Hale, G. Raugel, Reaction-diffusion equation on thin domains. J. Math. Pures Appl. (9)  71  (1992),   33--95. 

\bibitem{HR2}   J. Hale, G. Raugel, A reaction-diffusion equation on a thin L -shaped domain. Proc. Roy. Soc. Edinburgh Sect. A  125  (1995),    283--327.

\bibitem{KLB} P. B. Kadolkar, H. Lu, C.A. Blue, T. Ando,  R. Mayer, Application of rapid Infrared heating to aluminum forgings. 25th Forging Industry Technical Conference (2004), 1--12.

\bibitem{L} V. L. Lanin, Infrared heating in the technology of soldering components in electronics. Surface Engineering and Applied Electrochemistry, 435  (2007), 381--386.



\bibitem{LYY} E.H. Lee, D.Y. Yang, W.H. Yang,  Numerical modeling and experimental validation of focused surface heating using near-infrared rays with an elliptical reflector. International Journal of Heat and Mass Transfer, 78  (2014), 240--250.


\bibitem{LYY2} E.H. Lee, J.W. Yoon, D.Y. Yang, Study on springback from thermal-mechanical boundary condition imposed to V-bending and L-bending processes coupled with infrared rays local heating. Int. J. Mater. Form, 11 (2018), 417--433.


\bibitem{LY}E.H. Lee, D.Y. Yang, Experimental and numerical analysis of a parabolic reflector with a radiant heat source. International Journal of Heat and Mass Transfer, 85  (2015), 860--864.


\bibitem{LK} E.H. Lee, W. Kim, Electrical–Thermal–Mechanical Analysis of Focused Infrared Heating Process. Int. J. of Precis. Eng. and Manuf.-Green Tech. (2020) (In press, online available).


\bibitem{LWW} D. Li, B. Wang, X. Wang, Limiting behavior of non-autonomous stochastic reaction–diffusion equations on thin domains. J. Differential Equations  262  (2017),  1575--1602.

\bibitem{LLW}  F. Li, Y. Li, R. Wang, Limiting dynamics for stochastic reaction-diffusion equations on the Sobolev space with thin domains. Comput. Math. Appl.  79  (2020),  457--475.




\bibitem{R} N.K. Rastogi, Recent trends and developments in infrared heating in food processing. Critical Reviews in Food Science and Nutrition, 52(9) (2012), 737--760. 

\bibitem{S} C. Sandu, Infrared radiative drying in food engineering: a process analysis. Biotechnology Progress, 2(3) (1986) ,109--119 .

\end{thebibliography}
\end{document}